\def\N{\mathbb{N}}
\def\R{\mathbb{R}}
\newcommand{\ex}[1]{\mathsf{E}\left[\,#1\,\right]}
\newcommand{\abs}[1]{\left\vert#1\right\vert}
\newcommand{\set}[1]{\left\{#1\right\}}
\newcommand{\ind}[1]{\mathbf{1}_{#1}}
\newtheorem{theorem}{Theorem}[section]
\newtheorem{corollary}[theorem]{Corollary}
\theoremstyle{definition}
\newtheorem{definition}[theorem]{Definition}
\theoremstyle{remark}
\newtheorem{remark}[theorem]{Remark}
\begin{document}
\title{Fractional Brownian motion in a nutshell}
\author{Georgiy Shevchenko}

\address{Department of Mechanics and Mathematics, Taras Shevchenko National University of Kyiv\\
Volodymirska 60, 01601 Kyiv, Ukraine\\
E-mail: zhora@univ.kiev.ua}

\begin{abstract}
This is an extended version of the lecture notes to a mini-course devoted to fractional Brownian motion and delivered to the participants of 7th Jagna International Workshop.

\keywords{fractional Brownian motion, Hurst parameter, H\"older continuity, consistent estimation, simulation\\
2010 AMS subject classification:  60G22, 60-01, 62M09, 65C20}    
\end{abstract}

\maketitle

\section{Introduction}

The fractional Brownian motion (fBm) is a popular model for both short-range dependent and long-range dependent phenomena in various fields, including physics, biology, hydrology, network research, financial mathematics etc. There are many good sources devoted to the fBm, I will cite only few of them. For a good introductory text on the fBm, a reader may address recent Ivan Nourdin's lecture notes \cite{nourdin} or the dedicated chapter of the famous David Nualart's book \cite{nualart}. More comprehensive guides are by Yuliya Mishura \cite{mishura} and Francesca Biagini et al \cite{biagini}; the former has stronger emphasis towards the pathwise integration, while the latter, towards the white noise approach. A review of Jean-Fran\c cois Coeurjolly\cite{coeurjolly} is an extensive guide to the use of statistical methods and simulation procedures for the fBm.

It is worth saying few words on the aim and the origin of this article. After I gave a mini-course devoted to the fBm at the 7th Jagna International Conference, the organizers approached me with a proposition to write lecture notes. Knowing that there are already so many sources devoted to the fBm, I was hesitant for the first time. But ultimately I decided to agree and wrote this article. Naturally, it would be impossible to cover all the aspects of the fBm in such a short exposition, and this was not my aim. My aim was  rather to make a brief introduction to the fBm. Since most of the listeners of the course were not pure mathematicians, I tried to keep the text as accessible as possible, at the same time paying more attention at such practical issues as the simulation and identification of fBm. 

The article structured as follows. In Section~\ref{sec:def}, the fractional Brownian motion is defined, and its essential properties are studied. Section~\ref{sec:cont} is devoted to the continuity of fBm. In Section~\ref{sec:repres}, several integral representations of fBm in terms of standard Wiener process are given. Section~\ref{sec:stat} discusses the statistical estimation issues for fBm. In Section~\ref{sec:sim}, a simulation algorithm for fBm is presented.

\section{Definition and basic properties}\label{sec:def}

\begin{definition}
A \textit{fractional Brownian motion} (fBm) is a centered Gaussian process $\set{B_t^H,t\ge 0}$ with the covariance function 
\begin{equation}\label{eq:fbmcov}
\ex{B_t^H B_s^H} = \frac12 \left(t^{2H} + s^{2H} - \abs{t-s}^{2H}\right).
\end{equation}
This process has a parameter $H\in(0,1)$, called the \textit{Hurst parameter} or the \textit{Hurst index}.
\end{definition}

\begin{remark}
In order to specify the distribution of a Gaussian process, it is enough to specify its mean and covariance function, therefore, for each fixed value of the Hurst parameter $H$, the distribution of $B^H$ is uniquely determined by the above definition. However, this definition does not guarantee the existence of fBm; to show that the fBm exists, one needs e.g.\ to check that the covariance function is non-negative definite. We will show the existence later, in Section~\ref{sec:repres}, giving an explicit construction of fBm.
\end{remark}

Observe that for $H=1/2$, the covariance function is $\ex{B^{1/2}_t B^{1/2}_s} = t\wedge s$, i.e.\ $B^{1/2} = W$, a standard Wiener process, or a Brownian motion. This justifies the name ``fractional Brownian motion'': $B^H$ is a generalization of Brownian motion obtained by allowing the Hurst parameter to differ from $1/2$. Later we will uncover  the meaning of the Hurst parameter.

Further we study several properties which can be deduced immediately from the definition. The following representation for the covariance of increments of fBm is easily obtained from \eqref{eq:fbmcov}:
\begin{equation}\label{eq:fbminccov}
\begin{gathered}
\ex{\left(B_{t_1}^H - B_{s_1}^H\right)\left(B_{t_2}^H - B_{s_2}^H\right)}\\ = \frac{1}{2}\left(\abs{t_1-s_2}^{2H} + \abs{t_2-s_1}^{2H} -\abs{t_2-t_1}^{2H} - -\abs{s_2-s_1}^{2H}\right).
\end{gathered}
\end{equation}

\textbf{Stationary increments.} Take a fixed $t\ge 0$ and consider the process $Y_t = B^H_{t+s} - B^H_s$, $t \ge 0$. It follows from \eqref{eq:fbminccov} that the covariance function of $Y$ is the same as that of $B^H$. Since the both processes are centered Gaussian, the equality of covariance functions implies means that $Y$ has the same distribution as $B^H$. Thus, the incremental behavior of $B^H$ at any point in the future is the same, for this reason $B^H$ is said to have stationary increments. Processes with stationary increments  are good for modeling a time-homogeneous evolution of system.

\textbf{Self-similarity.} Now consider, for a fixed $a>0$, the process $Z_t = B^H_{at}$, $t\ge 0$. It is clearly seen from \eqref{eq:fbmcov} that $Z$ has the same covariance, consequently,  the same distribution as $a^H B^H$. This property is called $H$-self-similarity. It  means the scale-invariance of the process: in each time interval the behavior is the same, if we choose the space scale properly. 

It is an easy exercise to show that the fBm with Hurst parameter $H$ is, up to a constant, the only $H$-self-similar Gaussian process with stationary increments. 

\textbf{Dependence of increments.} Let us return to the formula \eqref{eq:fbminccov} and study it in more detail. Assume that $s_1<t_1<s_2<t_2$ so that the intervals $[s_1,t_1]$ and $[s_2,t_2]$ do not intersect. Then the left-hand side of \eqref{eq:fbminccov} can be expressed as 
$\big((f(a_1) - f(a_2) - (f(b_1)  - f(b_2)\big)/2$, where $a_1 = t_2-s_1$, $a_2 = t_2 - t_1$, $b_1 = s_2 - s_1$, $b_2 = s_2 - t_1$, $f(x) = x^{2H}$. Obviously, $a_1-a_2 = b_2 - b_1 = t_1-s_1$. Therefore, $$ \ex{\left(B_{t_1}^H - B_{s_1}^H\right)\left(B_{t_2}^H - B_{s_2}^H\right)}<0 \quad\text{for } H\in(0,1/2)$$ in view of the concavity of $f$; $$\ex{\left(B_{t_1}^H - B_{s_1}^H\right)\left(B_{t_2}^H - B_{s_2}^H\right)}>0\quad\text{for } H\in(1/2,1),$$ since $f$ is convex in this case. Thus, for $H\in(0,1/2)$, the fBm has the property of counterpersistence: if it was increasing in the past, it is more likely to decrease in the future, and vice versa. In contrast, for $H\in(1/2,1)$, the fBm is persistent, it is more likely to keep trend than to break it. Moreover, for such $H$, the fBm has the property of long memory (long-range dependence). 

Finally we mention that the fBm is neither a Markov process nor a semimartingale. 

\section{Continuity of fractional Brownian motion}\label{sec:cont}

There are several ways to establish the continuity of fBm. 
All of them are based on the formula
\begin{equation}\label{fbm-vario}
\ex{\left(B^H_t - B^H_s\right)^2} = \abs{t-s}^{2H}
\end{equation}
for the variogram of fBm, which follows from \eqref{eq:fbminccov}.

The first of the methods is probably the most popular way to prove that a process is continuous.
\begin{theorem}[Kolmogorov--Chentsov continuity theorem]
Assume that for a stochastic process $\set{X_t,t\ge 0}$ there exist such $K>0,p>0,\beta>0$ such that for all $t\ge 0, s\ge 0$
$$
\ex{\abs{X_t - X_s}^p}\le K \abs{t-s}^{1+\beta}.
$$
Then the process $X$ has a continuous modification, i.e.\ a process $\set{\widetilde{X}_t,t\ge 0}$ such that $\widetilde X\in C[0,\infty)$ and for all $t\ge 0$ \ $\Pr(X_t = \widetilde X_t)=1$. Moreover, for any $\gamma\in (0,\beta/p)$ and $T>0$ the process $\widetilde{X}$ is $\gamma$-H\"older continuous on $[0,T]$, i.e.
$$
\sup_{0\le s< t \le T} \frac{\abs{X_t-X_s}}{(t-s)^\gamma}<\infty.
$$
\end{theorem}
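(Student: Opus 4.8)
\section*{Proof proposal}

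The plan is to reduce to the interval $[0,1]$ by time scaling (it suffices to prove H\"older continuity on each $[0,T]$, and the substitution $X_t\mapsto X_{Tt}$ preserves the hypothesis with the same $p,\beta$ and a new constant $K\,T^{1+\beta}$), and then to build $\widetilde X$ as the continuous extension of $X$ from the dyadic rationals. Write $D_n = \set{k2^{-n} : 0\le k\le 2^n}$ and $D = \bigcup_{n\ge 0} D_n$, and fix $\gamma\in(0,\beta/p)$. The first step is to control, at each dyadic level, the largest increment between neighbouring dyadic points. By the Markov inequality and the hypothesis,
$$
\Pr\!\left(\abs{X_{(k+1)2^{-n}} - X_{k2^{-n}}} \ge 2^{-\gamma n}\right) \le \frac{\ex{\abs{X_{(k+1)2^{-n}} - X_{k2^{-n}}}^p}}{2^{-\gamma n p}} \le K\, 2^{-n(1+\beta-\gamma p)}.
$$
Summing over the $2^n$ neighbouring pairs, the event $A_n$ that some level-$n$ neighbouring increment exceeds $2^{-\gamma n}$ satisfies $\Pr(A_n)\le K\,2^{-n(\beta-\gamma p)}$. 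Here the strict inequality $\gamma<\beta/p$ is consumed: the exponent $\beta-\gamma p$ is positive, so $\sum_n \Pr(A_n)<\infty$, and Borel--Cantelli supplies a finite random index $N$ such that for every $n\ge N$ all neighbouring increments at level $n$ stay below $2^{-\gamma n}$.

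The second and main step is a chaining argument converting this level-by-level control into a uniform H\"older estimate on $D$. Given $s,t\in D$ with $0<t-s\le 2^{-N}$, pick $m\ge N$ with $2^{-(m+1)}< t-s\le 2^{-m}$, and join $s$ to $t$ through a finite path of dyadic points that refines the dyadic level one step at a time; each edge is then a neighbouring increment at some level $n\ge m$, and only a bounded number of edges is needed at each level. Summing the bounds $2^{-\gamma n}$ along the path gives a geometric series (convergent because $\gamma>0$) dominated by a constant multiple of $2^{-\gamma m}$, hence by a constant multiple of $(t-s)^\gamma$. This is the delicate step: one must organise the telescoping so that the number of edges at each level stays uniformly bounded, which is what makes the series summable and re-expressible in terms of $t-s$. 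The outcome is a random constant $C$ with $\abs{X_t-X_s}\le C\,(t-s)^\gamma$ for all $s,t\in D$ with $t-s\le 2^{-N}$.

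H\"older continuity on the dense set $D$ makes $t\mapsto X_t$ uniformly continuous on $D$, so it extends uniquely to a continuous function $\widetilde X$ on $[0,1]$, which inherits the same H\"older bound by passing to the limit; patching the rescaled copies produces a process in $C[0,\infty)$ that is $\gamma$-H\"older on every $[0,T]$. It remains to verify that $\widetilde X$ is a modification of $X$. For fixed $t$ choose dyadic $t_n\to t$: then $\widetilde X_{t_n}=X_{t_n}\to\widetilde X_t$ almost surely by continuity, while the moment hypothesis gives $\ex{\abs{X_{t_n}-X_t}^p}\to 0$, so $X_{t_n}\to X_t$ in probability. Uniqueness of limits in probability forces $\Pr(\widetilde X_t = X_t)=1$, which completes the proof.
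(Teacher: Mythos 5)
The paper never proves this theorem: it is quoted as a classical result and used only to derive Corollary~3.2, so there is no in-text argument to diverge from. Your proposal is the standard dyadic chaining proof, and its skeleton is sound: the Markov--Borel--Cantelli step is carried out correctly (the exponent $\beta-\gamma p>0$ is exactly where the hypothesis $\gamma<\beta/p$ is consumed), the scaling reduction to $[0,1]$ preserves the moment bound, and the identification of $\widetilde X$ as a modification --- almost sure convergence along dyadics versus convergence in probability from the moment hypothesis --- is clean.

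Two points are loose ends rather than errors, and you should close them. First, the chaining step, which you yourself flag as the delicate one, is asserted rather than executed. The standard way to make ``a bounded number of edges per level'' precise is to truncate the binary expansions of $s$ and $t$ at level $m$ (where $2^{-(m+1)}<t-s\le 2^{-m}$), connect the two truncations by at most two level-$m$ neighbouring edges, and then descend from each truncation to its endpoint, adding at most one neighbouring edge at each level $n>m$; this gives the bound $\abs{X_t-X_s}\le 2\sum_{n\ge m}2^{-\gamma n}+2\cdot 2^{-\gamma m}\le C_\gamma 2^{-\gamma m}\le C_\gamma' (t-s)^\gamma$. Second, your H\"older estimate is established only for pairs with $t-s\le 2^{-N}$, while the theorem asserts finiteness of the supremum over \emph{all} $0\le s<t\le T$; since $N$ is random, one more line is needed for large gaps. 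For instance, once $\widetilde X$ is continuous, hence bounded on the compact interval, write for $t-s>2^{-N}$
$$
\abs{\widetilde X_t-\widetilde X_s}\le 2\sup_{[0,1]}\abs{\widetilde X}\le 2\, 2^{\gamma N}\sup_{[0,1]}\abs{\widetilde X}\,(t-s)^\gamma ,
$$
which is finite almost surely because $N<\infty$ a.s. (Alternatively, split $[s,t]$ into at most $(t-s)2^{N}+1$ subintervals of length at most $2^{-N}$ and apply the small-gap bound to each.) With these two insertions your argument is a complete and correct proof of the statement as given.
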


\begin{corollary}\label{thm:fbm-cont}
The fractional Brownian motion $B^H$ has continuous modification. Moreover, for any $\gamma\in(0,H)$ this modification is $\gamma$-H\"older continuous on each finite interval.
\end{corollary}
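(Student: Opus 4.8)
The plan is to deduce the corollary directly from the Kolmogorov--Chentsov continuity theorem, so the only real work is to verify its hypothesis for the fractional Brownian motion using the variogram formula \eqref{fbm-vario}. The key observation is that $B^H_t - B^H_s$ is a centered Gaussian random variable whose variance is exactly $\abs{t-s}^{2H}$, and for a centered Gaussian random variable all moments are controlled by the variance. Concretely, I would first record that for an even integer $p = 2m$ one has $\ex{\abs{\xi}^{p}} = c_p\, \sigma^{p}$ for any centered Gaussian $\xi$ with variance $\sigma^2$, where $c_p = (p-1)!! = \ex{\abs{N}^{p}}$ for a standard normal $N$ is a finite constant depending only on $p$.

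Applying this with $\xi = B^H_t - B^H_s$ and $\sigma^2 = \abs{t-s}^{2H}$ gives
\begin{equation}
\ex{\abs{B^H_t - B^H_s}^{p}} = c_p\, \abs{t-s}^{pH}
\end{equation}
for every even integer $p$. To match the hypothesis of the theorem, I would set $1 + \beta = pH$, i.e.\ $\beta = pH - 1$, with $K = c_p$. This requires $\beta > 0$, equivalently $p > 1/H$, which is harmless: since $H > 0$, I can always choose an even integer $p$ large enough that $pH > 1$. With this choice the theorem applies and yields both a continuous modification and, for every $\gamma \in (0, \beta/p) = (0, H - 1/p)$, that this modification is $\gamma$-H\"older on each $[0,T]$.

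The remaining point is to upgrade the H\"older range from $(0, H - 1/p)$ to the full range $(0,H)$ claimed in the corollary. Fix any $\gamma \in (0,H)$. Since $H - 1/p \to H$ as $p \to \infty$ through even integers, I can choose $p$ large enough that $\gamma < H - 1/p$; for that $p$ the theorem already gives $\gamma$-H\"older continuity. One must check that the continuous modification obtained does not depend on $p$, but this is immediate: any two continuous modifications of the same process are indistinguishable (they agree almost surely at every rational time by the modification property, hence agree everywhere by continuity), so a single continuous modification serves for all $\gamma \in (0,H)$ simultaneously.

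I do not anticipate a serious obstacle here; the argument is essentially a verification. The only mild subtlety worth stating cleanly is the quantifier juggling: the theorem is applied once for each $\gamma$ (with a $\gamma$-dependent choice of $p$), and one should note explicitly that the modification can be taken common to all exponents. The finiteness and exact value of the Gaussian moment constant $c_p$ is routine and I would simply cite it rather than compute it.
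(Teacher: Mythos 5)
Your proof is correct and follows essentially the same route as the paper: both verify the Kolmogorov--Chentsov hypothesis via the Gaussian moment identity $\ex{\abs{B^H_t-B^H_s}^p}=c_p\abs{t-s}^{pH}$ and then let $p\to\infty$ to exhaust all exponents $\gamma\in(0,H)$. If anything, you are more careful than the paper, which silently skips the point you address explicitly --- that the continuous modification can be chosen independently of $p$ since any two continuous modifications are indistinguishable.
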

\begin{proof}
Since $B_t^H - B_s^H$ is centered Gaussian with variance $\abs{t-s}^H$, we have 
$\ex{\abs{B_t^H - B_s^H}^p} = K_p \abs{t-s}^{pH}$. Therefore, taking any $p>1/H$, we get the existence of continuous modification. We also get the H\"older continuity of the modification with exponent $\gamma\in(0,H-1/p)$. Choosing $p$ sufficiently large, we arrive at the desired statement. 
\end{proof}
To avoid speaking about a continuous modification each time, in the rest of this article we will assume the continuity of fBm itself. 

Another way to argue the H\"older continuity lies through a very powerful deterministic inequality. 
\begin{theorem}[Garsia--Rodemich--Rumsey inequality] 
For any $p>0$ and $\theta>1/p$ there exists a constant $K_{p,\theta}$ such that for any  $f\in C[0,T]$
\begin{equation*}
\sup_{0\le s<t\le T} \frac{\abs{f(t)-f(s)}}{(t-s)^{\theta-1/p}}\le C_{p,\theta}\left(\int_0^T \int_0^T\frac{\abs{f(x)-f(y)}^{p}}
{\abs{x-y}^{\theta p+1}}\,dx\,dy\right)^{1/p}.
\end{equation*}
\end{theorem}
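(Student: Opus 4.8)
The plan is to obtain this as a specialization of the general Garsia--Rodemich--Rumsey lemma: if $\Psi,\psi\colon[0,\infty)\to[0,\infty)$ are continuous and strictly increasing with $\Psi(0)=\psi(0)=0$ and $\Psi(u)\to\infty$ as $u\to\infty$, and if $f\in C[0,T]$ satisfies $\int_0^T\int_0^T\Psi\big(\abs{f(x)-f(y)}/\psi(\abs{x-y})\big)\,dx\,dy\le B$, then for all $0\le s<t\le T$
\[
\abs{f(t)-f(s)}\le 8\int_0^{t-s}\Psi^{-1}\!\Big(\frac{4B}{u^2}\Big)\,d\psi(u).
\]
To recover the stated inequality I would take $\Psi(u)=u^{p}$ and $\psi(u)=u^{\theta+1/p}$, so that $\Psi\big(\abs{f(x)-f(y)}/\psi(\abs{x-y})\big)=\abs{f(x)-f(y)}^{p}/\abs{x-y}^{\theta p+1}$ and $B$ is exactly the double integral appearing on the right-hand side of the theorem. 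Then $\Psi^{-1}(v)=v^{1/p}$ and $d\psi(u)=(\theta+1/p)\,u^{\theta+1/p-1}\,du$, so the integrand above equals $(4B)^{1/p}(\theta+1/p)\,u^{\theta-1/p-1}$. The hypothesis $\theta>1/p$ is precisely what makes this exponent exceed $-1$, so the integral converges at $u=0$ and equals $(4B)^{1/p}(\theta+1/p)(t-s)^{\theta-1/p}/(\theta-1/p)$; dividing by $(t-s)^{\theta-1/p}$, taking the supremum over $s<t$, and setting $C_{p,\theta}=8\cdot4^{1/p}(\theta+1/p)/(\theta-1/p)$ completes the reduction.

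The real content is therefore the lemma, which I would prove by the classical point-selection argument. Fix $s<t$ and replace $B$ by its restriction to the square $[s,t]^2$, which only decreases it. Writing $I(x)=\int_s^t\Psi\big(\abs{f(x)-f(y)}/\psi(\abs{x-y})\big)\,dy$, one has $\int_s^t I(x)\,dx\le B$. I would then construct a strictly decreasing sequence $t_0=t>t_1>t_2>\cdots$ with $t_n\to s$, in which each $t_{n+1}$ is chosen from a subinterval of $(s,t_n)$ whose $\psi$-length is halved at every step. The selection rests on a Chebyshev/averaging pigeonhole: on that subinterval the set where $I$ exceeds twice its average has less than half the measure, and the same holds for $y\mapsto\Psi\big(\abs{f(t_n)-f(y)}/\psi(\abs{t_n-y})\big)$; since two such sets cannot cover the subinterval, a point $t_{n+1}$ exists at which both quantities are simultaneously below twice their respective averages.

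From the second bound I would invert $\Psi$ to get $\abs{f(t_n)-f(t_{n+1})}\le\psi(t_n-t_{n+1})\,\Psi^{-1}\big(4B/\psi(t_n-t_{n+1})^2\big)$ (the factor $4$ and the $u^{-2}$ arising from the two averaging steps, each contributing a factor $2$ and a reciprocal length), and then telescope: by continuity of $f$, $\abs{f(t)-f(s)}\le\sum_{n\ge0}\abs{f(t_n)-f(t_{n+1})}$. The geometric halving of the $\psi$-lengths is calibrated precisely so that this sum is dominated by a Riemann sum for $8\int_0^{t-s}\Psi^{-1}(4B/u^2)\,d\psi(u)$, which yields the lemma. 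I expect the main obstacle to lie exactly in this bookkeeping: fixing the decay rate of the subinterval lengths so that the bounds chain together, and verifying that the telescoped series is genuinely controlled by the claimed integral, together with making the two-fold averaging selection rigorous. Once the lemma is established, the specialization above is routine, and the convergence of the final integral at $u=0$ — the single place where $\theta>1/p$ is indispensable — is immediate.
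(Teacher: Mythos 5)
The paper itself states the Garsia--Rodemich--Rumsey inequality without proof, as a classical tool quoted from the literature, so there is no in-paper argument to compare against; your proposal supplies the standard one. Your reduction from the general $\Psi$--$\psi$ lemma is correct and complete: with $\Psi(u)=u^p$ and $\psi(u)=u^{\theta+1/p}$ the double integral is exactly the right-hand side of the theorem, $\Psi^{-1}(4B/u^2)\,d\psi(u)=(4B)^{1/p}(\theta+1/p)\,u^{\theta-1/p-1}\,du$, the condition $\theta>1/p$ is precisely integrability at $u=0$, and the arithmetic giving $C_{p,\theta}=8\cdot 4^{1/p}(\theta+1/p)/(\theta-1/p)$ checks out. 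Note also that the lemma in this form (as in Stroock--Varadhan) requires only that $\Psi,\psi$ be continuous, strictly increasing and vanish at $0$ --- no convexity --- so small $p<1$ causes no trouble, and your use of continuity of $f$ in the telescoping step matches the paper's remark that continuity is essential (the indicator counterexample).

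There is, however, one genuine gap in your sketch of the lemma: you anchor the chaining sequence at $t_0=t$. The selection of $t_{n+1}$ controls $\Psi\big(\abs{f(t_n)-f(t_{n+1})}/\psi(t_n-t_{n+1})\big)$ by twice the average of $y\mapsto\Psi\big(\abs{f(t_n)-f(y)}/\psi(\abs{t_n-y})\big)$ over the current subinterval, and that average is bounded only through $I(t_n)$; the needed bound $I(t_n)\le 2B/\abs{J_{n-1}}$ is supplied by the \emph{first} Chebyshev condition at the \emph{previous} step. At $n=0$ there is no previous step, and $I(t)$ at the fixed endpoint is not controlled by $\int_s^t I(x)\,dx\le B$ --- it can be arbitrarily large, so your very first link $\abs{f(t_0)-f(t_1)}$ is unbounded. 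The classical repair: first pick an interior point $t_0\in(s,t)$ with $I(t_0)\le B/(t-s)$ (Chebyshev once more), run your telescoping sequence from $t_0$ down to $s$, with subinterval lengths calibrated by $\psi(d_{n+1})=\frac12\psi(d_n)$, to obtain $\abs{f(t_0)-f(s)}\le 4\int_0^{t-s}\Psi^{-1}(4B/u^2)\,d\psi(u)$; then apply the same argument to the reflected function $f(s+t-\cdot)$ to bound $\abs{f(t)-f(t_0)}$ by the same quantity. In particular the factor $8$ is the sum $4+4$ of the two one-sided estimates, not pure bookkeeping within a single chain, as your sketch suggests. With that adjustment your outline is the standard proof and goes through.
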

\begin{remark}
One of the most widely used techniques in calculus is the estimation of integral by the supremum of integrand times measure of integration set, e.g.\ $\abs{\int_a^b f(x) dx}\le \sup_{x\in[a,b]} \abs{f(x)} (b-a)$. However, obviously, one cannot reverse this inequality and estimate the integrand by the value of integral (although the temptation is great sometimes). Now we see why the Garsia--Rodemich--Rumsey (GRR) inequality is a very striking fact (at least at first glance): it is a valid example of such reverse statement.
\end{remark}
The continuity assumption in the GRR inequality is essential. It is easy to see that for $f=\ind{[0,T/2]}$ the right-hand side of the inequality is finite, while the left-hand side is infinite. So in order to show the H\"older continuity of fBm using the GRR inequality, we should first establish usual continuity with the help of some other methods (and we have already done that). The advantage of the GRR inequality is that in contast to the Kolmogorov--Chentsov theorem it allows to estimate the H\"older norm of a process.
\begin{proof}[Alternative proof ot the second part of Corollary~\ref{thm:fbm-cont}]
We remind that we assume $B^H$ itself to be continuous.
Take some $\theta<H$ and $p>1/H$ and write, as before,
$\ex{\abs{B_t^H - B_s^H}^p} = K_p \abs{t-s}^{pH}$. Denote $$\zeta = \sup_{0\le s<t\le T} \frac{\abs{B_t^H-B_s^H}}{(t-s)^{\theta-1/p}}.$$
 Raising the GRR inequality to the power $p$ and taking expectations, we get 
\begin{align*}
\ex{\zeta^p}&\le K_{p,\theta}^p\int_0^T \int_0^T\frac{\ex{\abs{B_x^H-B^H_y}^{p}}}
{\abs{x-y}^{\theta p+1}}\,dx\,dy\\&= K_{p,\theta}^p K_p \int_0^T \int_0^T\abs{x-y}^{p(H-\theta)-1}\,dx\,dy<\infty.
\end{align*}
It follows that $\zeta <\infty$ a.s. By changing, if necessary, the fBm $B^H$ on an event of zero probability, we get the desired H\"older continuity.
\end{proof}

Finally, we mention that by using specialized facts about regularity of Gaussian processes, it is possible to show that the exact modulus of continuity of fBm is $\omega(\delta) = \delta^H \abs{\log \delta}^{1/2}$. Consequently, it is only H\"older continuous of order up to $H$, but not $H$-H\"older continuous (although quite close to be). 

Let us now summarize what we know about the Hurst parameter $H$. We already knew that, depending on whether $H\in(0,1/2)$ or $H\in(1/2,1)$, the increments of fBm are either negatively correlated or positively correlated. It is also easy to see that the correlation increases with $H$. In other words, the fBm becomes more and more persistent when $H$ increases (ultimately for $H=1$ it becomes a linear function: $B^1_t = \xi t$, where $\xi$ is standard Gaussian).

On the other hand, it follows from the above discussion that the Hurst parameter $H$ dictates the regularity of fBm: the larger $H$ is, the smoother fBm becomes. Now it is probably the most suitable moment to give some pictures of fBm, which illustrate perfectly the dependence of fBm on $H$.

\begin{figure}[htbp]
\begin{center}
\includegraphics[width=.8\textwidth]{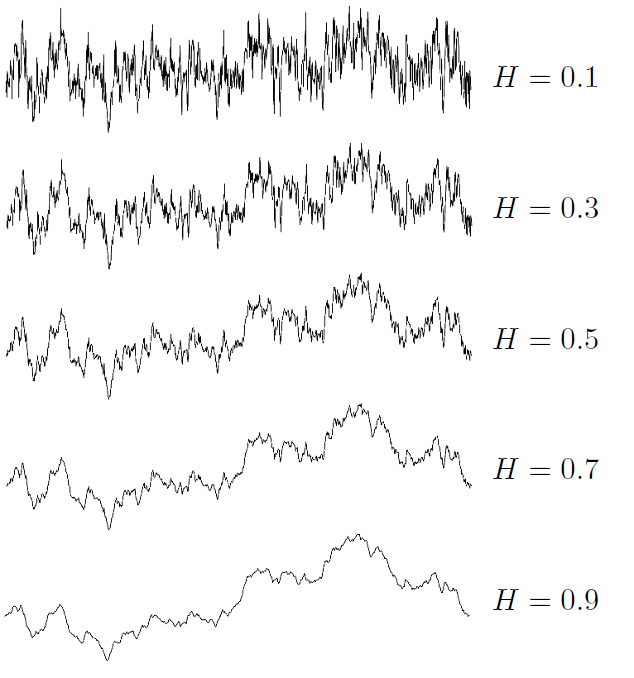}
\caption{Paths of fBm for different values of $H$.}
\end{center}
\end{figure}

\section{Integral representations of fractional Brownian motion}\label{sec:repres}

Further we will study representations of fractional Brownian motion in terms of a standard Wiener process. I expect the reader to be aware of It\^o stochastic calculus, nevertheless, it is worth to give concise information on the objects we need.

Let $\set{W_t,t\ge \R}$ be a standard Wiener process on $\R$, i.e. $\set{W_t,t\ge 0}$ and $\set{W_{-t},t\ge 0}$ are independent standard Wiener processes on $[0,\infty)$. 

For functions $f\in L^2(\R)$ the integral $I(f) = \int_{\R} f(x) dW(x)$
with respect to $W$ (the Wiener integral) is constructed as follows. For a step function 
$$
h(x) = \sum_{k=1}^n a_k \ind{[s_k,t_k]}(x),
$$
define 
$$
I(h) = \int_{\R} h(x)dW(x) = \sum_{k=1}^{n}a_k \left(W_{t_k}-W_{s_k}\right).
$$
It is easily checked that $I$ is linear and isometric, consequently, it can be extended from the set of step functions to $L^2(\R)$. This extension, naturally, is an isometry too. We summarize below its basic properties. 

\begin{enumerate}[1.]
\item linearity: for $\alpha,\beta\in\R$, $f,g\in L^2(\R)$ 
$$
I(\alpha f + \beta g) = \alpha I(f) + \beta I(g);
$$
\item mean zero: $\ex{I(f)} = 0$;
\item isometry: $\ex{I(f)^2} = \int_\R f(x)^2 dx$, moreover, for $f,g\in L^2 (\R)$
$$
\ex{I(f)I(g)} = \int_\R f(x)g(x) dx.
$$
\item for $f_1,\dots, f_n \in L^2(\R)$ the random variables $I(f_1),\dots,I(f_n)$ are jointly Gaussian.
\end{enumerate}

Next we consider representations of the form 
$$
B^H_t = I(k_t) = \int_{\R} k_t (x)dW(x),
$$
where for each $t\ge 0$ \ $k_t\in L^2(\R)$ is some deterministic kernel (not necessarily supported by the whole real line). Due to the properties of Wiener integral, the process given by such representation is a centered Gaussian process. So in order to argue that such representation defines an fBm, it is enough to show that it has the same covariance. The following simple statement may also be of use: a process has covariance given by \eqref{eq:fbmcov} iff its variogram is given by  \eqref{fbm-vario}. 

The \textit{Mandelbrot--van Ness} representation, or the moving average representation of fBm is defined in the following proposition. It also can be used as a proof of existence of fBm.

\begin{theorem}
Let for $H\in(0,1)$ $$k_t^{MA}(x) = K^{MA}_H\left((t-x)_+^{H-1/2}\ind{(-\infty,0)}(x) - (-x)_+^{H-1/2}\right), $$
where 
\begin{align*}
K^{MA}_H & = \left(\frac{1}{2H} + \int_0^\infty \big((x+1)^{H-1/2} - x^{H-1/2}\big)^2 dx \right)^{-1/2}\\ & = \frac{\left(\Gamma(2H+1)\sin \pi H\right)^{1/2}}{\Gamma(H+1/2)}.
\end{align*}
Then the process $X_t = I(k^{MA}_t)$ is an fBm with Hurst parameter $H$. 
\end{theorem}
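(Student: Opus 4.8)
The plan is to verify, for the process $X_t = I(k^{MA}_t)$, the two features that characterize fBm. Properties (2) and (4) of the Wiener integral make $X$ automatically a centered Gaussian process, so—as recalled just before the statement, a process has covariance \eqref{eq:fbmcov} iff its variogram is \eqref{fbm-vario}—it suffices to show that $\ex{(X_t - X_s)^2} = \abs{t-s}^{2H}$ for all $s,t\ge 0$. Everything thus reduces to one integral computation, which also doubles as the proof that each $k^{MA}_t\in L^2(\R)$, i.e.\ that the representation makes sense.

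First I would apply the isometry property (3) to write
$$\ex{(X_t - X_s)^2} = \int_{\R}\left(k^{MA}_t(x) - k^{MA}_s(x)\right)^2 dx.$$
The key simplification is that the summand $(-x)_+^{H-1/2}$, being independent of $t$, cancels in the difference $k^{MA}_t - k^{MA}_s$, leaving
$$\ex{(X_t - X_s)^2} = (K^{MA}_H)^2 \int_{\R}\left((t-x)_+^{H-1/2} - (s-x)_+^{H-1/2}\right)^2 dx.$$
I would then exploit the built-in scaling. With $u = t-s > 0$, the translation $x\mapsto x+s$ reduces the integral to $\int_{\R}\big((u-x)_+^{H-1/2} - (-x)_+^{H-1/2}\big)^2\,dx$, and the dilation $x = uy$ pulls out the power law:
$$\int_{\R}\left((u-x)_+^{H-1/2} - (-x)_+^{H-1/2}\right)^2 dx = u^{2H}\int_{\R}\left((1-y)_+^{H-1/2} - (-y)_+^{H-1/2}\right)^2 dy.$$
This already produces the correct exponent $2H$; it remains to identify the $H$-dependent constant $C_H := \int_{\R}\big((1-y)_+^{H-1/2} - (-y)_+^{H-1/2}\big)^2\,dy$.

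Finally I would evaluate $C_H$ by splitting the line according to the supports of the two positive parts: the integrand vanishes on $(1,\infty)$; on $(0,1)$ it equals $(1-y)^{2H-1}$, contributing $\int_0^1 (1-y)^{2H-1}\,dy = \tfrac{1}{2H}$; and on $(-\infty,0)$ the substitution $z=-y$ gives $\int_0^\infty\big((z+1)^{H-1/2}-z^{H-1/2}\big)^2\,dz$. Summing these two pieces yields exactly $(K^{MA}_H)^{-2}$ by the very definition of $K^{MA}_H$, so $(K^{MA}_H)^2 C_H = 1$ and $\ex{(X_t - X_s)^2} = \abs{t-s}^{2H}$, as required. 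The same decomposition establishes $k^{MA}_t\in L^2(\R)$: convergence at the singular endpoints ($y\to 0^-$ and $y\to 1^-$, where the integrand behaves like $z^{2H-1}$ and $(1-y)^{2H-1}$) needs $2H-1>-1$, i.e.\ $H>0$, while the tail at $-\infty$, controlled by the Taylor estimate $(z+1)^{H-1/2}-z^{H-1/2}=O(z^{H-3/2})$ and hence of order $z^{2H-3}$, is integrable exactly when $H<1$.

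The main obstacle is this last integral: one must handle the three delicate endpoints to confirm finiteness on the full range $H\in(0,1)$, and recognize that the pieces reassemble precisely into the constant defining $K^{MA}_H$. (The closed form $K^{MA}_H = (\Gamma(2H+1)\sin\pi H)^{1/2}/\Gamma(H+1/2)$ is not needed for this identification; it is a separate evaluation of the same integral via Beta-function identities, which I would relegate to a remark.)
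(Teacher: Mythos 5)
Your proposal is correct and follows essentially the same route as the paper: reduce via the Wiener isometry to the variogram, use translation and dilation to pull out the factor $\abs{t-s}^{2H}$, and split the remaining integral into the pieces $\int_0^1(1-y)^{2H-1}\,dy = \tfrac1{2H}$ and $\int_0^\infty\bigl((z+1)^{H-1/2}-z^{H-1/2}\bigr)^2\,dz$, which reassemble into $(K^{MA}_H)^{-2}$ by definition. Your only addition is the explicit endpoint asymptotics confirming $k^{MA}_t\in L^2(\R)$ for all $H\in(0,1)$, which the paper leaves implicit, and like the paper you rightly omit the closed-form evaluation of $K^{MA}_H$.
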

\begin{proof}
As it was already mentioned above, in order to prove the statement, it suffices to show that for any $t,s\ge 0$ \ $\ex{(X_t-X_s)^2} = \abs{t-s}^{2H}$. 

Write, denoting $\mu = H-1/2$,
\begin{align*}
&\ex{(X_t-X_s)^2} = (K_H^{MA})^{2}\int_{\R} \big( (t-x)_+^{\mu} - (s-x)_+^{\mu}\big)^2 dx\\
&\  = (K_H^{MA})^{2}(t-s)^{2H} \int_{\R}\big( (x+1)_+^{\mu} - (x)_+^{\mu}\big)^2dx \\ 
&\ = (K_H^{MA})^{2}(t-s)^2H \left(\int_{-1}^0 (x+1)^{2H-1} dx + \int_0^\infty\big( (x+1)^{\mu} - x^{\mu}\big)^2dx\right)\\
&\ = (t-s)^{2H},
\end{align*}
as required. We will omit the proof of second formula for $K_H^{MA}$, an interested reader may refer to Appendix in \cite{mishura}.
\end{proof}

Let us now turn to the harmonizable representation of fBm. 
\begin{theorem}
Let for $H\in(0,1)$ 
$$
k_t^{Ha}(x) = K^{Ha}_H \abs{x}^{-H-1/2} \begin{cases}
\sin tx, &x\ge 0, \\ 
1- \cos tx, &x<0,
\end{cases}
$$
where 
\begin{equation*}
K^{Ha}_H = \left(2\int_0^\infty \frac{1-\cos x}{x^{2H+1} dx}\right)^{-1/2} = \frac{\left(2\Gamma(2H+1)\sin \pi H\right)^{1/2}}{\pi}.
\end{equation*}

Then the process $X_t = I(k^{Ha}_t)$ is an fBm with Hurst parameter $H$.
\end{theorem}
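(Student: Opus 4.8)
The plan is to follow verbatim the strategy of the Mandelbrot--van Ness proof. Since $X_t=I(k_t^{Ha})$ is a centered Gaussian process by the properties of the Wiener integral, and since (as noted above) a process has the fBm covariance \eqref{eq:fbmcov} if and only if its variogram equals \eqref{fbm-vario}, it suffices to show that $\ex{(X_t-X_s)^2}=\abs{t-s}^{2H}$. By the isometry property, this reduces to evaluating the $L^2(\R)$-norm of the kernel difference $k_t^{Ha}-k_s^{Ha}$.

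Before computing, I would confirm that $k_t^{Ha}\in L^2(\R)$, so that $I(k_t^{Ha})$ is well defined. As $x\to 0$ the two branches behave like $\abs{x}^{-H-1/2}\sin tx\sim t\abs{x}^{1/2-H}$ and $\abs{x}^{-H-1/2}(1-\cos tx)\sim\frac{t^2}{2}\abs{x}^{3/2-H}$, whose squares are integrable at the origin for $H\in(0,1)$; as $\abs{x}\to\infty$ both branches are bounded by a constant times $\abs{x}^{-H-1/2}$, and $\abs{x}^{-2H-1}$ is integrable at infinity for $H>0$. This also explains the shape of the kernel: on the negative half-line one must subtract the constant $1$ from $\cos tx$, since otherwise $\cos tx\to 1$ as $x\to 0$ would produce a non-integrable singularity of order $\abs{x}^{-2H-1}$.

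For the main step, write $k_t^{Ha}=K_H^{Ha}g_t$, split the integral at the origin, and substitute $x\mapsto-x$ on $(-\infty,0)$. Using that cosine is even, the two pieces merge into a single integral over $(0,\infty)$ whose integrand is $x^{-2H-1}$ times $(\sin tx-\sin sx)^2+(\cos tx-\cos sx)^2$. The key identity, and the entire reason for pairing $\sin$ on the right half-line with $1-\cos$ on the left, is the collapse
$$(\sin tx-\sin sx)^2+(\cos tx-\cos sx)^2=2\big(1-\cos((t-s)x)\big),$$
so that $\ex{(X_t-X_s)^2}=2(K_H^{Ha})^2\int_0^\infty x^{-2H-1}\big(1-\cos((t-s)x)\big)\,dx$.

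Finally I would extract the scaling: for $t>s$ the substitution $u=(t-s)x$ pulls out the factor $(t-s)^{2H}$ and leaves $\int_0^\infty u^{-2H-1}(1-\cos u)\,du$, which by the definition of $K_H^{Ha}$ is precisely $1/(2(K_H^{Ha})^2)$; hence $\ex{(X_t-X_s)^2}=\abs{t-s}^{2H}$. The closed-form value $(2\Gamma(2H+1)\sin\pi H)^{1/2}/\pi$ of the constant -- equivalently, the evaluation of $\int_0^\infty u^{-2H-1}(1-\cos u)\,du$ -- I would state as a standard special-function identity and omit, exactly as was done for the Mandelbrot--van Ness constant. The only genuine subtlety is the integrability check at the origin, which dictates the $1-\cos$ form; once that is secured, the trigonometric identity does all the remaining work.
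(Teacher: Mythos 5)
Your proposal is correct and follows essentially the same route as the paper's own proof: split the isometry integral at the origin, collapse the two pieces via the identity $(\sin tx-\sin sx)^2+(\cos tx-\cos sx)^2=2\bigl(1-\cos((t-s)x)\bigr)$, pull out $(t-s)^{2H}$ by the substitution $u=(t-s)x$, and absorb the remaining integral into the definition of $K_H^{Ha}$, with the closed-form constant omitted just as in the paper. Your preliminary check that $k_t^{Ha}\in L^2(\R)$, explaining why the $1-\cos$ branch is forced by integrability at the origin, is a small worthwhile addition the paper leaves implicit, but it does not change the argument.
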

\begin{proof}
As in the previous proof, write
\begin{align*}
&\ex{(X_t-X_s)^2}  = (K_H^{Ha})^{2}\bigg[\int_{0}^\infty \frac{\left(\sin tx - \sin sx \right)^2}{x^{2H+1}}dx \\
& \hspace{15em} + \int_{-\infty}^0 \frac{\left(\cos tx - \cos sx \right)^2}{(-x)^{2H+1}}dx \bigg]\\ 
&\qquad  = (K_H^{Ha})^{2}\int_0^\infty \frac{2 - 2\cos (t-s)x }{x^{2H+1}}dx\\
&\qquad  =  2(K_H^{Ha})^{2}(t-s)^{2H}\int_0^\infty\frac{1 - \cos z }{z^{2H+1}}dx = (t-s)^{2H}.
\end{align*}
Again, we do not proof the second formula for $K_H^{Ha}$.
\end{proof}

The third representation we consider, the so-called Volterra type representation, is a bit more involved than the former two, but its advantage is that the kernel in this representation has compact support. 
\begin{theorem}
Let for $H\in(1/2,1)$
$$
k_t^V(x) = K_H^V x^{1/2-H}\int_x^t s^{H-1/2} (s-x)^{H-3/2}ds\,\ind{[0,t]}(x),
$$
where 
$$
K_H^V = \left(\frac{H(2H-1)}{\mathrm{B}(2-2H,H-1/2)}\right)^{1/2} = K_H^{MA};
$$
for $H\in(0,1/2)$,
\begin{align*}
k_t^V(x)& = K_H^V x^{1/2-H}\bigg(t^{H-1/2}(t-x)^{H-1/2}\\
&\quad - (H-1/2)x^{1/2-H}\int_x^t s^{H-3/2} (s-x)^{H-1/2}ds\bigg)\,\ind{[0,t]}(x),
\end{align*}
where 
$$
K_H^V = \left(\frac{2H}{(1-2H)\mathrm{B}(1-2H,H+1/2)}\right)^{1/2}.
$$
Then $X_t = I(k_t^V)$ is an fBm with Hurst parameter $H$.
\end{theorem}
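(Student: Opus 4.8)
The plan follows the template of the two preceding proofs. Since $X_t=I(k_t^V)$ is a Wiener integral, $X$ is automatically a centered Gaussian process, so it suffices to verify that its covariance agrees with \eqref{eq:fbmcov}, equivalently that its variogram is \eqref{fbm-vario}. Because $k_t^V$ is supported on $[0,t]$, the isometry gives, for $0\le s\le t$, the covariance $\ex{X_tX_s}=\int_0^s k_t^V(x)\,k_s^V(x)\,dx$, and I would aim to show this equals $\frac12\big(t^{2H}+s^{2H}-(t-s)^{2H}\big)$.

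Direct evaluation of this iterated integral is awkward, so I would differentiate in the parameters. For $H>1/2$ the kernel simplifies dramatically under $\partial_t$: writing $k_t^V(x)=K_H^V x^{1/2-H}\int_x^t u^{H-1/2}(u-x)^{H-3/2}\,du$, differentiation kills the integral and leaves the clean product $\partial_t k_t^V(x)=K_H^V x^{1/2-H} t^{H-1/2}(t-x)^{H-3/2}\ind{[0,t]}(x)$. Since $k_s^V(s)=0$, no boundary term appears and $\partial_s\partial_t\ex{X_tX_s}=\int_0^s \partial_t k_t^V(x)\,\partial_s k_s^V(x)\,dx$, a single integral of powers of $x$, $(t-x)$ and $(s-x)$. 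This is an Euler-type (Beta/hypergeometric) integral, and the crux is that it collapses in closed form to $H(2H-1)(t-s)^{2H-2}$, which is exactly $\partial_s\partial_t$ of the target covariance for $s<t$. Recovering the covariance itself then follows by integrating back in $s$ and $t$ and matching the initial data $X_0=0$ (equivalently $k_0^V\equiv 0$). A useful consistency check is that for $H>1/2$ the stated normalizer equals $K_H^{MA}$, as it must, since the Volterra process and the already-constructed Mandelbrot--van Ness process should coincide.

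For $H<1/2$ the same differentiation idea applies, but the kernel is genuinely more delicate: the second term, built from $\int_x^t s^{H-3/2}(s-x)^{H-1/2}\,ds$, is present precisely because the plain fractional-integral kernel is no longer square integrable near the diagonal, so the representation now encodes a fractional derivative rather than an integral. Here I would either push the $\partial_t$ computation through with extra care about the (now singular) behavior at $x=0$ and $x=t$, or instead verify the variogram $\ex{(X_t-X_s)^2}=\int_0^t\big(k_t^V(x)-k_s^V(x)\big)^2\,dx=(t-s)^{2H}$ directly, reducing to the case $s=0$, $t=1$ by the self-similarity $k_{ct}^V(cx)=c^{H-1/2}k_t^V(x)$.

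The main obstacle will be the exact Beta/Gamma bookkeeping: showing that the surviving one-dimensional integral really sums to $H(2H-1)(t-s)^{2H-2}$ (respectively to $(t-s)^{2H}$ for the variogram), and that the two stated constants $K_H^V$ are the correct normalizers. A necessary preliminary in both regimes is checking the square integrability of $k_t^V$ near the endpoints $x=0$ and $x=t$, which is exactly where the case split $H\gtrless 1/2$ originates. Following the author's practice for $K_H^{MA}$ and $K_H^{Ha}$, I would carry out the reduction to a Beta function explicitly and defer the final identification of the constant to a reference such as \cite{mishura}.
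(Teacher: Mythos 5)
Your proposal is correct and takes essentially the same approach as the paper: both reduce, via the Wiener isometry, to evaluating the Euler integral $\int_0^{s} x^{-2\mu}(t-x)^{\mu-1}(s-x)^{\mu-1}\,dx$ (with $\mu=H-1/2$) in closed form as $t^{-\mu}s^{-\mu}(t-s)^{2H-2}\mathrm{B}(2-2H,H-1/2)$ --- the paper gets there by a Fubini swap of the triple covariance integral followed by the substitution $z=(1-x/u)/(1-x/v)$, which is just a reorganization of your differentiate-in-$t$-and-$s$-then-integrate-back scheme --- and then integrating $H(2H-1)\abs{u-v}^{2H-2}$ over $[0,t]\times[0,s]$ to recover the fBm covariance. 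The paper likewise proves only the case $H\in(1/2,1)$, explicitly calling the other case ``somewhat similar but lot more tricky,'' so your sketch-level deferral of $H<1/2$ matches the paper's own scope.
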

\begin{proof}
We will consider only the case $H\in(1/2,1)$, the other case being somewhat similar but lot more tricky.

Denote $\mu = H-1/2$ and write for $t,s\ge 0$
\begin{align*}
\ex{X_t X_s} &= (K_H^{V})^{2}\int_{0}^{t\wedge s} x^{-2\mu} 
\int_{x}^{t}u^{\mu} (u-x)^{\mu-1}du \int_{x}^{s}v^{\mu} (v-x)^{\mu -1}dv \, dx\\
& = (K_H^{V})^{2}\int_0^t \int_0^s u^{\mu}v^{\mu}\int_0^{u\wedge v} x^{-2\mu}(u-x)^{\mu-1}(v-x)^{\mu-1}dx\, dv\,du.
\end{align*}
For $u\le v$, make the change of variable $z = \frac{1-x/u}{1-x/v}$ in the inner integral so that $x = \frac{uv(1-z)}{v-zu}$, $u-x = \frac{uz(v-u)}{v-zu}$, $v-x = \frac{v(v-u)}{v-zu}$, $dx = -\frac{uv(v-u)}{(v-zu)^2}$ to obtain
\begin{align*}
&\int_0^{u} x^{-2\mu}(u-x)^{\mu-1}(v-x)^{\mu-1}dx \\
& = \int_0^1 \frac{(v-zu)^{2\mu}}{(uv)^{2\mu}(1-z)^{2\mu}}\frac{(uz)^{\mu-1}(v-u)^{\mu-1}}{(v-zu)^{\mu-1}}\frac{v^{\mu-1}(v-u)^{\mu-1}}{(v-zu)^{\mu-1}}\frac{uv(v-u)}{(v-zu)^2}dz\\
& = u^{-\mu} v^{-\mu}(v-u)^{2\mu-1}\int_0^1 z^{\mu-1}(1-z)^{-2\mu}dz\\
& = u^{-\mu} v^{-\mu} (v-u)^{2H-2} \mathrm{B}(2-2H,H-1/2),
\end{align*}
and a similar formula is valid for $v\le u$. Substituting this into the above expression for $\ex{X_tX_s}$, we arrive at
$$
\ex{X_t X_s} = H(2H-1) \int_0^t \int_0^s \abs{v-u}^{2H-2}du = \frac12 \left(t^{2H} + s^{2H} - \abs{t-s}^{2H}\right),
$$
as required.
\end{proof}

\section{Identification of Hurst parameter}\label{sec:stat}
In order to use a stochastic process as a model in practice, one needs a good statistical machinery. There are many statistical tools available for models based on the fBm, and this article is too short to cover them all. The most important statistical question is about the Hurst parameter, which governs all essential properties of fBm. 

Consider the following statistical problem: to estimate the Hurst parameter $H$ based on the observations $B_1^H, B_2^H,\dots, B_N^H$ of fBm, where $N$ is large. There are several approaches to this problem. We will study here only an approach based on discrete variations of fBm, further methods can be found in \cite{coeurjolly}. 

First we need to destroy the dependence by applying a suitable filter. Specifically, a filter of order $r$ is a polynomial $a(x) = \sum_{k=0}^{q} a_k x^k$ such that $a(1) = a'(1) =\dots = a^{(r-1)}(1) = 0$, $a^{(r)}(1)\neq 0$ (equivalently, $1$ is the root of polynomial $a$  of multiplicity $r$). The filtered observations are defined as
$$
B_n^a = \sum_{k=0}^{q} a_k B^H_{n+k},\quad n = 1,2,\dots, N-q.
$$
Popular filters are Increments 1 with $a(x) = x-1$, Daubechies 4 with $a(x) = \frac14(x-1)(x^2(1-\sqrt 3) -2x)$, Increments 2 with $a(x) = (x-1)^2$. The first two filters are of order 1, the third, of order 2. As it was mentioned, the main aim of filtering is to reduce dependence of the data. Indeed, for a filter $a$ of order $r\ge 1$ consider the covariance
\begin{align*}
\ex{B_n^a B_{m}^a}& = \sum_{k=0}^{q}\sum_{j=0}^{q} a_k a_j\ex{B^H_{n+k}B^H_{n+k}}\\
& = \frac{1}{2}\sum_{k=0}^{q} \sum_{j=0}^{q} a_k a_j \big( (n+k)^{2H}+ (m+j)^{2H} - \abs{m+k-n-j}^{2H}\big)\\
& = \frac12 \bigg(\sum_{k=0}^{q} a_k (n+k)^{2H}\sum_{j=0}^{q}a_j + \sum_{j=0}^{q} a_j (m+j)^{2H}\sum_{k=0}^{q}a_k\\
&\qquad\qquad - \sum_{k=0}^{q} \sum_{j=0}^{q}a_k a_j \abs{m+k-n-j}^{2H}\bigg)\\
& = - \frac{1}{2}\sum_{k=0}^{q} \sum_{j=0}^{q} a_k a_j \abs{m-n+k-j}^{2H}=: \rho_H^a(m-n),
\end{align*}
where we have used that $\sum_{k=0}^{q}a_k = a(1) = 0$. Consequently, the filtered data $B^a_1,\dots, B^a_{N-q}$ is a stationary process. Moreover, since $(x-1)^r\mid a(x)$, in the expression for $\rho_H^a$ one takes the finite difference of the function $x^{2H}$ \ $2r$ times: $r$ times with respect to $n$ and $r$ times with respect to $m$. It follows that 
$\rho_H^a (n)\sim K_{H,a} n^{2(H-r)}$, thus the  covariance indeed decays faster for large $r$. 

To define an estimator for the Hurst coefficient, for $m\ge 1$ consider the dilated filter $a^m(x):= a(x^m) = \sum_{k=0}^{q} a_k x^{km}$. It is obvious that $\rho_H^{a^m}(0) = m^{2H}\rho_H^{a}(0)$, equivalently,
\begin{equation}\label{log-regression}
\log \rho_H^{a^m}(0) = 2H\log m + \log\rho_H^{a}(0).
\end{equation}
Thus, an estimator for $H$ may be obtained by taking a linear regression of estimators for $\log \rho_H^{a^m}(0)$ on $\log m$. To estimate $\rho_H^{a^m}(0)$ consistently, one can use the empiric moments. 
\begin{theorem}\label{thm:consistency}
The empiric variance 
$$
V_N^{a^m} = \frac{1}{N-mq} \sum_{k=1}^{N-mq}\left(B^{a^m}_k\right)^2
$$
is a strongly consistent estimator of $r_H^{a^m}(0)$, i.e.\ 
$V_N^{a^m}\to r_H^{a^m}(0)$ a.s.\ as $N\to\infty$.
\end{theorem}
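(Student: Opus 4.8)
The plan is to treat this as a strong law of large numbers for the centered stationary Gaussian sequence $\xi_k := B_k^{a^m}$, $k\ge 1$, whose covariance $\ex{\xi_j\xi_k} = \rho_H^{a^m}(j-k)$ was computed above; in particular $\ex{\xi_k^2} = \rho_H^{a^m}(0)$, so $V_N^{a^m}$ is unbiased and the claim amounts to saying its fluctuations vanish almost surely. First I would establish mean-square convergence by bounding the variance. Since the $\xi_k$ are jointly Gaussian and centered, Isserlis' (Wick's) formula gives $\mathrm{Cov}(\xi_j^2,\xi_k^2) = 2\,\rho_H^{a^m}(j-k)^2$, whence, writing $M = N-mq$,
$$
\mathrm{Var}\big(V_N^{a^m}\big) = \frac{2}{M^2}\sum_{j,k=1}^{M}\rho_H^{a^m}(j-k)^2 \le \frac{2}{M}\sum_{\abs{l}<M}\rho_H^{a^m}(l)^2 .
$$

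Next I would invoke the decay of the covariance. Because $a^m$ is again a filter of order $r$ (writing $a(x)=(x-1)^r b(x)$ with $b(1)\neq 0$ one checks $a^m(x)=(x-1)^r\,[\text{cofactor nonzero at }1]$), the estimate $\rho_H^{a^m}(l)\sim K\,\abs{l}^{2(H-r)}$ established earlier applies, so $\rho_H^{a^m}(l)^2\sim K^2\abs{l}^{4(H-r)}$ with $4(H-r)\le 4H-4<0$. When $4(H-r)<-1$ the series $\sum_l \rho_H^{a^m}(l)^2$ converges and $\mathrm{Var}(V_N^{a^m})=O(1/M)$; the only residual case is $r=1$, $H\ge 3/4$, where $\sum_{\abs{l}<M}\rho_H^{a^m}(l)^2 = O(M^{4H-3})$ (with a logarithmic factor at $H=3/4$), giving $\mathrm{Var}(V_N^{a^m})=O(M^{4H-4})$. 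In every case there is $\delta=\delta(H,r)>0$ with $\mathrm{Var}(V_N^{a^m})\le C\,M^{-\delta}$, which already yields $V_N^{a^m}\to\rho_H^{a^m}(0)$ in $L^2$.

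Finally I would upgrade this to almost sure convergence by a subsequence-plus-monotonicity argument. Choosing $N_p=\lceil p^\beta\rceil$ with $\beta\delta>1$ makes $\sum_p \mathrm{Var}(V_{N_p}^{a^m})<\infty$, so Chebyshev's inequality and Borel--Cantelli give $V_{N_p}^{a^m}\to\rho_H^{a^m}(0)$ a.s. For the intermediate indices I would use that the partial sums $S_N:=\sum_{k=1}^{N-mq}\xi_k^2$ are nondecreasing: for $N_p\le N\le N_{p+1}$,
$$
\frac{N_p-mq}{N_{p+1}-mq}\,V_{N_p}^{a^m}\ \le\ V_N^{a^m}\ \le\ \frac{N_{p+1}-mq}{N_p-mq}\,V_{N_{p+1}}^{a^m},
$$
and since $N_{p+1}/N_p\to 1$ both bounds tend to $\rho_H^{a^m}(0)$, squeezing the full sequence to the same limit a.s.

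The main obstacle I anticipate is exactly the almost-sure (rather than $L^2$) statement in the long-range dependent regime $r=1$, $H\ge 3/4$, where the covariances fail to be square-summable and the variance decays only polynomially; the sandwich by monotone partial sums along a slowly growing polynomial subsequence is what rescues the argument, and one must verify that the exponent $\delta$ stays strictly positive for every $H<1$ so that an admissible $\beta$ exists. As an alternative one could note that a stationary Gaussian sequence with $\rho(n)\to 0$ is ergodic and apply Birkhoff's ergodic theorem to $\set{\xi_k^2}$ directly, but the variance route is more elementary and has the bonus of producing an explicit convergence rate.
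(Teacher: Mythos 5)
Your proof is correct, but it takes a genuinely different route from the paper, whose entire proof is one line: the filtered sequence $\set{B_k^{a^m},k\ge 1}$ is stationary, so the claim ``follows immediately from the ergodic theorem'' --- precisely the alternative you mention in your closing remark. (What the paper leaves implicit, and you correctly flag, is that Birkhoff's theorem alone gives convergence to a conditional expectation on the invariant $\sigma$-field; one needs ergodicity, which for a stationary Gaussian sequence follows from $\rho_H^{a^m}(n)\to 0$, since vanishing covariance forces the spectral measure to be atomless.) Your route --- Isserlis' formula giving $\operatorname{Cov}(\xi_j^2,\xi_k^2)=2\,\rho_H^{a^m}(j-k)^2$, the covariance decay transferred to the dilated filter (your verification that $a^m$ retains order $r$ is a point the paper itself uses tacitly in its asymptotic normality proof), Chebyshev plus Borel--Cantelli along $N_p=\lceil p^\beta\rceil$, and the monotone sandwich exploiting nonnegativity of $\xi_k^2$ --- is more elementary and buys strictly more: an explicit $L^2$ rate $O(M^{-\delta})$ with $\delta=\min(1,4(1-H))$ when $r=1$ (logarithmic correction at $H=3/4$) and $\delta=1$ when $r\ge 2$, together with an argument that survives the long-memory regime $r=1$, $H\in[3/4,1)$, where the covariances fail to be square-summable and the ergodic theorem gives no rate at all. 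It is also the natural starting point for the ``different strong consistency proof'' that the paper's own Remark says is needed for fixed-interval sampling $\Delta=T/N$, where the ergodic theorem yields only convergence in probability; there your monotone sandwich breaks down (refining the grid changes all observations rather than appending new ones), but the variance bound combined with hypercontractivity of the second Gaussian chaos, to pass from $L^2$ to $L^{2p}$ and apply Borel--Cantelli along the full sequence, does the job. One cosmetic caveat: for exceptional values such as $H=1/2$ the constant $K_{H,a}$ in $\rho_H^{a^m}(n)\sim K n^{2(H-r)}$ can vanish (the filtered sequence is then finitely dependent), so you should read the paper's asymptotic as the upper bound $\rho_H^{a^m}(n)=O\bigl(n^{2(H-r)}\bigr)$, which is all your argument actually uses.
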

\begin{proof}
Since the sequence $\set{B_k^{a^m},k\ge 1}$ is stationary, the result follows immediately from the ergodic theorem.
\end{proof}
\begin{corollary}\label{thm:Hestimator}
Let a set $M\subset \N$ contain at least two elements, and $\widehat k^{a,M}_N$ be the coefficient of linear regression of $\set{\log V_N^{a^m},m\in M}$ on $\set{\log m, m\in M}$. Then the statistic $\widehat H^{a,M}_N = \widehat k^{a,M}_N/2$ is a strongly consistent estimator of $H$.
\end{corollary}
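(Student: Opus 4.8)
The plan is to exhibit the estimator $\widehat{H}^{a,M}_N$ as a continuous deterministic function of the finitely many empirical variances $\set{V_N^{a^m},m\in M}$ and then pass to the limit using the strong consistency established in Theorem~\ref{thm:consistency}. First I would recall the explicit least-squares formula for the regression slope: writing $\bar\ell = \abs{M}^{-1}\sum_{m\in M}\log m$, one has
\begin{equation*}
\widehat k^{a,M}_N = \frac{\sum_{m\in M}(\log m - \bar\ell)\,\log V_N^{a^m}}{\sum_{m\in M}(\log m - \bar\ell)^2}.
\end{equation*}
Because $M$ contains at least two distinct positive integers, the numbers $\set{\log m, m\in M}$ are not all equal, so the denominator $\sum_{m\in M}(\log m-\bar\ell)^2$ is strictly positive. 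Hence the right-hand side is a well-defined continuous function of the vector $(\log V_N^{a^m})_{m\in M}$, and this positivity is precisely where the hypothesis $\abs{M}\ge 2$ enters.

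Next I would transfer the almost sure convergence through this map. By Theorem~\ref{thm:consistency}, for each fixed $m\in M$ we have $V_N^{a^m}\to \rho_H^{a^m}(0)$ a.s.\ as $N\to\infty$. Each limit $\rho_H^{a^m}(0)=\ex{(B_1^{a^m})^2}$ is the variance of a nondegenerate Gaussian variable, hence strictly positive, so $\log$ is continuous at $\rho_H^{a^m}(0)$ and $\log V_N^{a^m}\to\log\rho_H^{a^m}(0)$ a.s. Since $M$ is finite, the intersection of these finitely many almost sure events is again almost sure, so the whole vector converges componentwise a.s. Applying the continuous slope map above, $\widehat k^{a,M}_N$ converges a.s.\ to the slope of the least-squares line fitted to the deterministic points $\set{(\log m,\log\rho_H^{a^m}(0)),m\in M}$.

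Finally I would identify that deterministic limit using the identity \eqref{log-regression}, which asserts that the points $(\log m,\log\rho_H^{a^m}(0))$ lie exactly on the line $y=2H\,x+\log\rho_H^{a}(0)$. For a perfectly collinear data set the least-squares slope reproduces the exact slope: substituting $\log\rho_H^{a^m}(0)=2H\log m+\log\rho_H^{a}(0)$ into the slope formula, the constant term drops out and the numerator equals $2H\sum_{m\in M}(\log m-\bar\ell)^2$, so the limiting slope is precisely $2H$. Therefore $\widehat k^{a,M}_N\to 2H$ a.s., and dividing by $2$ yields $\widehat H^{a,M}_N\to H$ a.s., the asserted strong consistency.

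The argument is essentially a continuous-mapping composition with no deep obstacle; the one point requiring genuine care is the strict positivity of the limits $\rho_H^{a^m}(0)$, needed both for the continuity of $\log$ at the limit and to keep the regressed quantities finite, together with the strict positivity of the regression denominator. Both reduce to checking that a nontrivial filter applied to fBm produces a random variable with nonzero variance, which is immediate, so I expect the bookkeeping of assembling the finitely many a.s.\ limits to be the only mild subtlety.
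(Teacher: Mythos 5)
Your argument is correct and is exactly the route the paper takes: the paper's proof is the one-line remark that the corollary ``follows directly from Theorem~\ref{thm:consistency} and equation \eqref{log-regression},'' and your write-up simply makes explicit the continuous-mapping and least-squares bookkeeping that this remark leaves implicit. Nothing differs in substance; your care about the strict positivity of $\rho_H^{a^m}(0)$ and of the regression denominator is a sound (and correct) filling-in of details the paper omits.
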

\begin{proof}
Follows directly from Theorem~\ref{thm:consistency} and equation \ref{log-regression}.
\end{proof}
\begin{remark}
Evidently, the same procedure can be used to estimate the Hurst parameter from observations $c B_1^H,c B^H_2,\dots, c B^H_N$ of fBm multiplied by an unknown scale coefficient $c$. This will not cause any problem, as in \eqref{log-regression} we would have an extra term $\log c$, which does not influence the estimation procedure. Moreover, thanks to the self-similarity property, the estimation procedure will not change if the scaled fBm is observed not at the positive integer points, by at the points of some other equidistant grid, i.e.\ if one observes the values $c B^H_{\Delta}, c B^H_{2\Delta}, \dots, c B^H_{N\Delta}$. It is even possible to take $\Delta =T/N$ so that we observe the values on some fixed interval. However,  one needs a different \textit{strong} consistency proof, as in this case the ergodic theorem gives only the convergence in probability.
\end{remark}
The simplest example of the regression set $M$ in Corollary~\ref{thm:Hestimator} is $M=\set{1,2}$, and the simplest example of the filter is Increments 1, $d(x) =x-1$. We get the following standard strongly consistent estimator of $H$:
$$
\widehat{H}_k =\frac{1}{2} = \frac{1}{2\log 2}(\log V^{d^2}_N-\log V^{d}_N)=\frac12\log_2 \frac{V^{d^2}_N}{V^{d}_N},
$$
where
\begin{equation*}
V^{d}_N =\frac{1}{N-1}\sum_{k=1}^{N-1}\left(B^H_{k+1}-B^H_k\right)^2,\quad
V^{d^2}_N  =\frac{1}{N-2}\sum_{k=1}^{N-1}\left(B^H_{k+2}-B^H_k\right)^2.
\end{equation*}

Let us now turn to the asymptotic normality of the coefficients. We start by formulating a rather general statement.

Let $\xi_1,\xi_2,\dots$ be a stationary sequence of standard Gaussian variables with covariance $\rho(n) =\ex{\xi_1\xi_{n+1}}$, and $g\colon \R\to \R$ be a function such that $\ex{g(\xi_1)}=0$, $\ex{g(\xi_1)^2}<\infty$. The latter assumption means that $g\in L^2(\R,\gamma)$, where $\gamma$ is the standard Gaussian measure on $\R$. Consequently, $g$ can be expanded in a series $g(x)=\sum_{k=0}^\infty g_k H_k(x)$ with respect to a system $\set{H_k,k\ge 0}$ of orthogonal polynomials for the measure $\gamma$, which are Hermite polynomials. We have $g_0 = \ex{g(\xi_1)} = 0$. The smallest number $p$ such that $g_p\neq 0$ is called the Hermite rank of $g$.

The following theorem describes the limit behavior of the cumulative sums $S_N = \sum_{k=1}^{N} g(\xi_k)$.
\begin{theorem}[Breuer--Major]
Assume that $\sum_{n=1}^{\infty} \abs{\rho(n)}^p <\infty$. Then one has the following convergence in finite-dimensional distributions:
$$
\set{\frac{1}{\sqrt N}S_{[Nt]}, t\ge 0} \to \set{\sigma_{\rho,g} W_t,t\ge 0},\quad N\to\infty,
$$
where 
$$
\sigma^2_{\rho,g}= \sum_{k=p}^\infty g_k^2 k! \sum_{n\in\mathbb Z} \rho(n)^k.
$$
\end{theorem}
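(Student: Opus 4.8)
The plan is to reduce the statement to a central limit theorem inside a single Wiener chaos and then recombine the chaoses. Expanding $g$ in Hermite polynomials gives $g=\sum_{k=p}^{\infty}g_k H_k$, so by linearity $S_N=\sum_{k\ge p}g_k T_N^{(k)}$ with $T_N^{(k)}:=\sum_{j=1}^{N}H_k(\xi_j)$. The basic identity for jointly standard Gaussian variables,
\[
\ex{H_k(\xi_i)H_\ell(\xi_j)}=\delta_{k\ell}\,k!\,\rho(i-j)^k,
\]
shows moreover that blocks of different order are orthogonal. Representing $\xi_j=I_1(e_j)$ with $\ex{I_1(e_i)I_1(e_j)}=\rho(i-j)$, one has $H_k(\xi_j)=I_k(e_j^{\otimes k})$, so that $N^{-1/2}T_N^{(k)}=I_k(f_N^{(k)})$ with $f_N^{(k)}=N^{-1/2}\sum_{j=1}^{N}e_j^{\otimes k}$ belonging to the fixed $k$-th Wiener chaos.

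Next I would compute variances. For a single $k$,
\[
\ex{\big(N^{-1/2}T_N^{(k)}\big)^2}=\frac{k!}{N}\sum_{i,j=1}^{N}\rho(i-j)^k=k!\sum_{\abs{n}<N}\Big(1-\frac{\abs{n}}{N}\Big)\rho(n)^k.
\]
Since $\abs{\rho(n)}\le\rho(0)=1$, for $k\ge p$ we have $\abs{\rho(n)}^k\le\abs{\rho(n)}^p$, so the hypothesis $\sum_n\abs{\rho(n)}^p<\infty$ and dominated convergence yield the limit $k!\sum_{n\in\mathbb{Z}}\rho(n)^k$. The same bound gives $\sum_{k\ge p}g_k^2 k!\sum_n\abs{\rho(n)}^k\le\ex{g(\xi_1)^2}\sum_n\abs{\rho(n)}^p<\infty$, so $\sigma_{\rho,g}^2$ is finite and, crucially, the covariance $c(n):=\ex{g(\xi_1)g(\xi_{1+n})}=\sum_{k\ge p}g_k^2 k!\,\rho(n)^k$ is absolutely summable.

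The heart of the proof is the one-chaos CLT: $N^{-1/2}T_N^{(k)}=I_k(f_N^{(k)})$ converges to a centered Gaussian with variance $k!\sum_n\rho(n)^k$. Here I would invoke the Nualart--Peccati fourth moment theorem: for random variables living in a fixed chaos with convergent variances, convergence in law to a Gaussian is equivalent to the vanishing of the nontrivial contractions $\norm{f_N^{(k)}\otimes_r f_N^{(k)}}\to 0$ for $r=1,\dots,k-1$ (equivalently, the fourth cumulant tends to $0$). Unwinding the contraction norms turns this into controlling sums of the form $N^{-2}\sum\rho(\cdot)^{a}\rho(\cdot)^{b}\rho(\cdot)^{c}\rho(\cdot)^{d}$ over quadruples of indices, and showing that the off-diagonal pairings contribute $o(N^2)$. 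This combinatorial/analytic estimate is the main obstacle, and it is exactly the point where $\sum_n\abs{\rho(n)}^p<\infty$ is used decisively, together with $\abs{\rho}\le 1$ to dominate the higher powers arising from $k\ge p$.

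Finally I would assemble the pieces. Truncating the Hermite expansion at level $K$, the finite sum $N^{-1/2}\sum_{k=p}^{K}g_k T_N^{(k)}$ is asymptotically Gaussian by the single-chaos result and orthogonality, while the tail is controlled uniformly in $N$ by its variance, which is at most $\sum_{k>K}g_k^2 k!\sum_n\abs{\rho(n)}^p$ and hence small for large $K$ because $\sum_k g_k^2 k!=\ex{g(\xi_1)^2}<\infty$; a standard truncation argument then gives $N^{-1/2}S_{[Nt]}\to\sigma_{\rho,g}W_t$ in law. For the finite-dimensional distributions I would apply the Cram\'er--Wold device to increments over disjoint intervals $(t_{i-1},t_i]$: the covariance between two such (rescaled) increments equals $N^{-1}\sum_{i\in I_1,\,j\in I_2}c(i-j)$, which tends to $0$ by absolute summability of $c$ together with Kronecker's lemma, while the variance over a single interval scales like $\sigma_{\rho,g}^2$ times its length. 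Hence the limiting increments are independent and Gaussian with variance proportional to interval length, identifying the limit as $\sigma_{\rho,g}W$.
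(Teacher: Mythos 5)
The paper does not prove this theorem: Breuer--Major is quoted as a known result (the original 1983 proof goes by the method of moments and the diagram formula), so there is no in-paper argument to compare yours against. Judged on its own, your outline is the standard modern route via Wiener chaos and the Nualart--Peccati fourth moment theorem, and the parts you actually carry out are correct: the orthogonality identity $\ex{H_k(\xi_i)H_\ell(\xi_j)}=\delta_{k\ell}k!\,\rho(i-j)^k$, the variance convergence by dominated convergence using $\abs{\rho(n)}^k\le\abs{\rho(n)}^p$ for $k\ge p$ (valid since $\abs{\rho(n)}\le\rho(0)=1$), the finiteness of $\sigma^2_{\rho,g}$ and absolute summability of $c(n)$, and the uniform-in-$N$ truncation of the chaos tail.

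Two steps, however, fall short of a proof. First, the contraction estimate $\norm{f_N^{(k)}\otimes_r f_N^{(k)}}\to 0$ for $r=1,\dots,k-1$, which you explicitly set aside as ``the main obstacle,'' is the entire analytic content of the theorem; what you have is a correct reduction of Breuer--Major to this estimate, not a proof of it. The bound does follow from $\sum_n\abs{\rho(n)}^p<\infty$ by a Young/H\"older-type estimate on the fourfold sums you describe, but that computation has to be done. Second, the finite-dimensional step as written is circular: Cram\'er--Wold plus vanishing cross-covariances of increments does not yield independence of the limiting increments unless you already know the limit vector is \emph{jointly} Gaussian, which is exactly what is to be proved. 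The clean fixes are either the multivariate fourth moment theorem of Peccati--Tudor (componentwise chaos convergence plus convergence of the covariance matrix implies joint Gaussian convergence), or to note that a linear combination $\sum_i\lambda_i$ of chaos-$k$ increment blocks is again a single chaos-$k$ integral $I_k\left(\sum_i\lambda_i f_N^{(k,i)}\right)$, so your contraction machinery applies verbatim to it and Cram\'er--Wold then legitimately closes the argument. With those two insertions the proposal becomes a complete and correct proof, genuinely different in method from the classical moment-computation proof the paper implicitly refers to.
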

As a corollary, we get asymptotic normality of the estimators. The statement depends on $r$, the order of filter $a$.
\begin{theorem}
Let either $H\in(0,3/4)$ or $H\in[3/4,1)$ and $r\ge 2$. Then for any $m\ge 1$ the estimator $V^{a^m}_N$ is an asymptotically normal estimator of $\rho^{a^m}_H(0)$, and $\widehat{H}^{a,M}_N$ is an asymptotically normal estimator of $H$.
\end{theorem}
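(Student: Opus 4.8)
The plan is to deduce both assertions from the Breuer--Major theorem applied to the second Hermite polynomial, after writing each empirical variance as a normalized quadratic functional of a stationary Gaussian sequence. Fix $m\ge 1$ and set $\xi_k = B_k^{a^m}/\sqrt{\rho_H^{a^m}(0)}$, so that $\set{\xi_k}$ is a stationary standard Gaussian sequence with normalized covariance $\rho(n) = \rho_H^{a^m}(n)/\rho_H^{a^m}(0)$. Since $(B_k^{a^m})^2 - \rho_H^{a^m}(0) = \rho_H^{a^m}(0)\,(\xi_k^2 - 1)$ and $x^2 - 1 = H_2(x)$, the relevant function is $g = H_2$, of Hermite rank $p = 2$ with the single nonzero coefficient $g_2 = 1$. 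The Breuer--Major hypothesis then reads $\sum_{n\ge 1}\abs{\rho(n)}^2 < \infty$. Here I would first note that the dilated filter $a^m(x)=a(x^m)$ has the same order $r$ as $a$, because $x^m-1$ vanishes to first order at $x=1$; hence the decay $\rho_H^{a^m}(n)\sim K\,n^{2(H-r)}$ from Section~\ref{sec:stat} gives $\abs{\rho(n)}^2\sim C\,n^{4(H-r)}$. The series converges exactly when $4(H-r)<-1$, that is $H<r-1/4$, which is precisely the hypothesis: for $r=1$ it reads $H<3/4$, while for $r\ge 2$ it holds for every $H\in(0,1)$.

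Under this condition the Breuer--Major theorem (taken at $t=1$) yields the convergence of $N^{-1/2}\sum_{k=1}^{N}(\xi_k^2-1)$ to $\mathcal N(0,\sigma^2)$ with $\sigma^2 = 2\sum_{n\in\mathbb Z}\rho(n)^2$. Multiplying through by $\rho_H^{a^m}(0)$ and using that the summation length $N-mq$ satisfies $(N-mq)/N\to 1$, I obtain $\sqrt{N}\,(V_N^{a^m}-\rho_H^{a^m}(0))\to \mathcal N(0,\tau_m^2)$ with $\tau_m^2 = 2\,(\rho_H^{a^m}(0))^2\sum_{n\in\mathbb Z}\rho(n)^2$. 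This is the asserted asymptotic normality of the estimator $V_N^{a^m}$.

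For the Hurst estimator I would upgrade this to joint asymptotic normality of the vector $(V_N^{a^m})_{m\in M}$. All the centered quadratic variables $(B_k^{a^m})^2-\rho_H^{a^m}(0)$ lie in the second Wiener chaos of the driving Wiener process, so the multivariate version of Breuer--Major (equivalently, componentwise convergence together with the Peccati--Tudor fourth-moment criterion) gives joint convergence of $\big(\sqrt N(V_N^{a^m}-\rho_H^{a^m}(0))\big)_{m\in M}$ to a centered Gaussian vector, once the cross-covariances $\ex{B_j^{a^m}B_{j+n}^{a^{m'}}}$ are shown to be square-summable in $n$, which again follows from the $n^{2(H-r)}$ decay. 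Because $V_N^{a^m}\to\rho_H^{a^m}(0)>0$ almost surely by Theorem~\ref{thm:consistency}, the delta method applied to $\log$ transfers this to joint normality of $\big(\sqrt N(\log V_N^{a^m}-\log\rho_H^{a^m}(0))\big)_{m\in M}$. Finally, the regression slope is a fixed linear functional $\widehat k_N^{a,M}=\sum_{m\in M}c_m\log V_N^{a^m}$ with $\sum_m c_m=0$ and $\sum_m c_m\log m=1$; combining these with the exact identity \eqref{log-regression}, $\log\rho_H^{a^m}(0)=2H\log m+\log\rho_H^a(0)$, gives $\sum_m c_m\log\rho_H^{a^m}(0)=2H$, so that $\sqrt N(\widehat H_N^{a,M}-H)=\tfrac12\sum_{m\in M}c_m\,\sqrt N(\log V_N^{a^m}-\log\rho_H^{a^m}(0))$ is a fixed linear combination of a jointly Gaussian limit and is therefore asymptotically normal.

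I expect the main obstacle to be the joint, multivariate central limit theorem for the family $(V_N^{a^m})_{m\in M}$: the Breuer--Major theorem as stated is scalar, and promoting it to the vector statement required by the regression needs the second-chaos and fourth-moment machinery, together with the concrete verification that the cross-covariances between distinct dilated filters decay like $n^{2(H-r)}$ and are hence square-summable in the same regime $H<r-1/4$. All remaining steps---the delta method for $\log$ and the passage through the linear regression functional---are routine once this joint convergence is in hand.
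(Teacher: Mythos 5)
Your proposal is correct, and for the first assertion it coincides with the paper's proof: the same normalization $\xi_k = B_k^{a^m}/\sqrt{\rho_H^{a^m}(0)}$, the same reduction to $g(x)=x^2-1$ of Hermite rank $2$, and the same verification that the decay $\rho_H^{a^m}(n)\sim K\,n^{2(H-r)}$ makes $\sum_n \rho(n)^2$ finite exactly when $r>H+1/4$, i.e.\ under the stated hypothesis. Where you genuinely add value is the second assertion: the paper disposes of $\widehat H^{a,M}_N$ in one sentence (``follows by writing explicitly the coefficient of linear regression and analyzing asymptotic expansions''), whereas you correctly diagnose that the scalar Breuer--Major theorem as stated does not suffice and supply the missing chain --- joint asymptotic normality of $\bigl(\sqrt N(V_N^{a^m}-\rho_H^{a^m}(0))\bigr)_{m\in M}$ via the second-chaos multivariate CLT (Arcones, or the Peccati--Tudor fourth-moment criterion) with square-summable cross-covariances, the delta method for $\log$ justified by the strong consistency of Theorem~\ref{thm:consistency} and the positivity of $\rho_H^{a^m}(0)$, and the observation that the regression slope is a fixed linear functional with weights satisfying $\sum_m c_m=0$ and $\sum_m c_m\log m=1$, which together with \eqref{log-regression} centers the statistic exactly at $2H$. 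You also make explicit a point the paper uses silently, namely that the dilated filter $a^m$ retains order $r$, since $x^m-1$ vanishes to first order at $x=1$. The only residual gap, shared with the paper, is that neither argument verifies non-degeneracy of the limiting variance of the slope; modulo that, your write-up is a complete version of the proof the paper only sketches.
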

\begin{proof}
We prove only the statement for $V^{a^m}_N$, the one for  $\widehat{H}^{a,M}_N$ follows by writing explicitly the coefficient of linear regression and analyzing asymptotic expansions. 

Write
\begin{align*}
&\sqrt{N-mq}\big(V^{a^m}_N - \rho_H^{a^m}(0)\big) = \frac{1}{\sqrt{N-mq}}\sum_{k=1}^{N-mq}\big((B^{a^m}_k)^2 - \rho_H^{a^m}(0)\big)\\
&\quad = \frac{\rho_H^{a^m}(0)}{\sqrt{N-mq}}\sum_{k=1}^{N-mq}\big(\xi_k^2 - 1\big),
\end{align*}
where $\xi_k = B^{a^m}_k/\sqrt{\rho_H^{a^m}(0)}$ is standard Gaussian. Obviously, $\rho(n):=\ex{\xi_k\xi_{k+n}} = \rho^{a^m}(n)/\rho_H^{a^m}(0)$. Thus, we are in a position to apply the Breuer--Major theorem with $g(x) = x^2 - 1$, which is obviously of Hemrite rank $2$. So we get the statement provided that $\sum_{n=1}^{\infty}\rho(n)^2<\infty$. It was argued above that $\rho^{a^m}(n)\sim K_{H,a}m^{2H} n^{2(H-r)}, n\to+\infty$. Therefore, $\sum_{n=1}^{\infty}\rho(n)^2<\infty$ iff $4(H-r)<-1$, equivalently, 
$r>H+1/4$, which is exactly our assumption.
\end{proof}
\begin{remark}
The last theorem can be used to construct approximate confidence intervals for $H$. It is possible to compute the asymptotic variance explicitly, but the expression for it is quite cumbersome, so it is not given here. A somewhat better approach is to numerically calculate it based on simulated data; the next section explains how to simulate fBm. Another observation is that the statement above depends on the value of $H$, which is a priori unknown and should be estimated. So, if one needs to construct a confidence interval for $H$,  I suggest using a filter of order $2$ unless it is \textit{a priori} known that $H<3/4$. 
\end{remark}

\section{Simulation of fractional Brownian motion}\label{sec:sim}
Among many methods to simulate fBm, the most efficient one is probably the Wood--Chan, or circulant method. The main idea is that a Gaussian vector $\xi$ with mean $\mu$ and covariance matrix $C$ can be represented as $\xi = \mu + S\zeta$, where $\zeta$ is a standard Gaussian vector, and the matrix $S$ is such that $S S^\top = C$. So in order to simulate a Gaussian vector, one needs to find a ``square root'' of covariance matrix.

Suppose that we need to simulate the values of fBm on some interval $[0,T]$. For practical purposes it is enough to simulate the values at a sufficiently fine grid, i.e.\ at the points $t_k^N = kT/N$, $k=0,1,\dots,N$ for some large $N$. Since an fBm is self-similar and has stationary increments, it is enough to simulate the values $B_1^H,B_2^H,\dots, B_N^H$ and multiply them by $(T/N)^H$. In turn, in order to simulate the latter values, it is suffices to simulate the increments $\xi_1 = B_1^H, \xi_2 = B_2^H - B_1^H, \dots, \xi_N = B_N^H - B_{N-1}^H$. The random variables $\xi_1,\xi_2,\dots,\xi_N$ form a stationary sequence of standard Gaussian variables with covariance
$$
\rho^{\vphantom{j}}_H(n)  = \ex{\xi_{1}\xi_{n+1}} = \frac{1}{2}\left((n+1)^{2H}+(n-1)^{2H} - 2n^{2H}\right),\ n\ge 1;
$$
this is so-called fractional Gaussian noise (fGn).
In other words, $\xi = (\xi_1,\dots,\xi_N)^\top$ is a centered Gaussian vector with covariance matrix
$$\operatorname{Cov}(\xi) = \begin{pmatrix}
1 &\rho^{\vphantom{j}}_H(1) &\rho^{\vphantom{j}}_H(2) &\dots & \rho^{\vphantom{j}}_H(N-2) &\rho^{\vphantom{j}}_H(N-1)\\
\rho^{\vphantom{j}}_H(1) & 1 &\rho^{\vphantom{j}}_H(1) &\dots & \rho^{\vphantom{j}}_H(N-3) &\rho^{\vphantom{j}}_H(N-2)\\
\rho^{\vphantom{j}}_H(2)& \rho^{\vphantom{j}}_H(1) & 1  &\dots & \rho^{\vphantom{j}}_H(N-4) &\rho^{\vphantom{j}}_H(N-3)\\
\vdots & \vdots & \vdots & \ddots & \vdots & \vdots \\
\rho^{\vphantom{j}}_H(N-2) & \rho^{\vphantom{j}}_H(N-3) & \rho^{\vphantom{j}}_H(N-4) & \dots & 1 & \rho^{\vphantom{j}}_H(1)\\
\rho^{\vphantom{j}}_H(N-1) & \rho^{\vphantom{j}}_H(N-2) & \rho^{\vphantom{j}}_H(N-3) & \dots & \rho^{\vphantom{j}}_H(1) & 1
\end{pmatrix}.$$

Finding a square root of $\operatorname{Cov}(\xi)$ is not an easy task. It appears that one can much easier find a square root of some bigger matrix. Specifically, put $M = 2(N-1)$ and
\begin{equation}\label{c_k}
\begin{aligned}
c_0 & = 1,\\
c_{k} & = \begin{cases}
\rho^{\vphantom{j}}_H(k), & k = 1,2,\dots, N-1,\\
\rho^{\vphantom{j}}_H(M-k), & k = N,N+1,\dots, M-1.
\end{cases}
\end{aligned}
\end{equation}
Now define a circulant matrix
$$C = \operatorname{circ}(c_0,c_1,\dots,c_{M-1})=\begin{pmatrix}
c_0 &c_1 &c_2 &\dots & c_{M-2} &c_{M-1}\\
c_{M-1} & c_0 &c_1 &\dots & c_{M-3} &c_{M-2}\\
c_{M-2}& c_{M-1} & c_0  &\dots & c_{M-4} &c_{M-3}\\
\vdots & \vdots & \vdots & \ddots & \vdots & \vdots \\
c_{2} & c_{3} & c_{4} & \dots & c_0 & c_1\\
c_{1} & c_{2} & c_{3} & \dots & c_{M-1} & c_0
\end{pmatrix}.$$  
Now define a matrix $Q = (q_{jk})_{j,k=0}^{M-1}$, with 
$$
q_{jk} = \frac{1}{\sqrt{M}}\exp\set{-2\pi i \frac{jk}{M}}.
$$
Observe that $Q$ is unitary: $Q^*Q = QQ^* = I_M$, the identity matrix. The multiplication by matrix $Q$  acts, up to the constant $1/\sqrt{M}$, as taking the discrete Fourier transform (DFT); the multiplication by $Q^*$ is, up to the constant $\sqrt{M}$, taking the inverse DFT. The following statement easily follows from the properties of DFT and its inverse.
\begin{theorem}
The circulant matrix $C$ has a representation $C = Q \Lambda Q^*$, where 
$\Lambda = \operatorname{diag}(\lambda_0,\lambda_1,\dots,\lambda_{M-1})$, $\lambda_k = \sum_{j=0}^{M-1}c_j \exp\set{-2\pi i\frac{jk}{M}}$.
Consequently, $C = S S^*$ with $S = Q \Lambda^{1/2} Q^*$, $\Lambda^{1/2} = \operatorname{diag}(\lambda^{1/2}_0,\lambda^{1/2}_1,\dots,\lambda^{1/2}_{M-1})$.
\end{theorem}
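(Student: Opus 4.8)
The plan is to verify directly that the columns of $Q$ are eigenvectors of $C$, from which both the diagonalization and the square-root factorization drop out. The conceptual reason this works is that $C$ is a polynomial in the basic cyclic shift, namely $C = \sum_{j=0}^{M-1} c_j P^j$ with $P = \operatorname{circ}(0,1,0,\dots,0)$, and every circulant is simultaneously diagonalized by the Fourier basis; I will carry this out concretely rather than invoke it.

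First I would record that, reading off the circulant structure, the $(j,l)$-entry of $C$ is $c_{(l-j)\bmod M}$ (indices running from $0$ to $M-1$). Fix $k$ and let $v$ denote the $k$-th column of $Q$, so $v_l = M^{-1/2}\exp\set{-2\pi i\, lk/M}$. Then
$$(Cv)_j = \frac{1}{\sqrt M}\sum_{l=0}^{M-1} c_{(l-j)\bmod M}\,\exp\set{-2\pi i\,\tfrac{lk}{M}}.$$
The key manipulation is the substitution $m=(l-j)\bmod M$: since $l\mapsto \exp\set{-2\pi i\, lk/M}$ is $M$-periodic, the exponential factors and the sum over a full residue system gives
$$(Cv)_j = \Big(\sum_{m=0}^{M-1} c_m \exp\set{-2\pi i\,\tfrac{mk}{M}}\Big)\,\frac{1}{\sqrt M}\exp\set{-2\pi i\,\tfrac{jk}{M}} = \lambda_k\, v_j.$$
Thus $Cv=\lambda_k v$ for each $k$, i.e.\ $CQ = Q\Lambda$, and multiplying on the right by $Q^*$ and using the unitarity $QQ^*=I_M$ yields $C = Q\Lambda Q^*$.

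For the factorization $C=SS^*$ with $S=Q\Lambda^{1/2}Q^*$, I would compute $SS^* = Q\Lambda^{1/2}Q^*\,(Q\Lambda^{1/2}Q^*)^* = Q\,\Lambda^{1/2}(Q^*Q)(\Lambda^{1/2})^*\,Q^* = Q\,\Lambda^{1/2}(\Lambda^{1/2})^*\,Q^*$, again using $Q^*Q=I_M$. This equals $Q\Lambda Q^* = C$ precisely when $\Lambda^{1/2}(\Lambda^{1/2})^* = \Lambda$, that is, when each $\lambda_k$ is real and nonnegative so that $\lambda_k^{1/2}\overline{\lambda_k^{1/2}} = \lambda_k$. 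Reality is automatic here: the defining sequence is symmetric, $c_k = c_{M-k}$ (this is exactly the purpose of the wrap-around in \eqref{c_k} with $M=2(N-1)$), so pairing the terms $m$ and $M-m$ turns $\lambda_k$ into a real cosine sum.

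The main obstacle is really only bookkeeping: getting the modular reindexing in the eigenvalue computation right, and being careful that the same primitive root and sign convention appear both in the entries of $Q$ and in the definition of $\lambda_k$. The one genuine subtlety worth flagging is the square-root step, which needs nonnegativity of the $\lambda_k$, not merely reality; this is the nontrivial requirement that the circulant embedding is designed to secure (and which, for some $H$ and small $M$, forces one to enlarge $M$), whereas the diagonalization $C=Q\Lambda Q^*$ holds unconditionally.
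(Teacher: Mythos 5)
Your proof is correct, and it supplies exactly what the paper leaves out: the paper states this theorem with no written proof at all, saying only that it ``easily follows from the properties of DFT and its inverse.'' Your direct verification --- checking that each column of $Q$ is an eigenvector of $C$ via the reindexing $m=(l-j)\bmod M$ and the $M$-periodicity of the exponential --- is the standard argument behind that one-line remark, and your bookkeeping (the $(j,l)$-entry of $C$ being $c_{(l-j)\bmod M}$, the sign convention in $Q$ matching the one in $\lambda_k$) is right. You also correctly isolate the one point where the theorem as literally stated is incomplete: the step $\Lambda^{1/2}\bigl(\Lambda^{1/2}\bigr)^*=\Lambda$ in $SS^*=C$ requires $\lambda_k\ge 0$, not merely $\lambda_k\in\R$, since for $\lambda_k<0$ one would get $\lvert\lambda_k\rvert$ instead of $\lambda_k$. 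The paper silently defers this to the paragraph after the theorem, where it asserts that for fBm the matrix $C$ is positive definite; your observation that reality follows from the symmetry $c_k=c_{M-k}$ built into \eqref{c_k} is a nice explicit supplement. One minor inaccuracy in your closing aside: for fractional Gaussian noise the circulant embedding with $M=2(N-1)$ is in fact known to be nonnegative definite for every $H\in(0,1)$ and every $N$, so no enlargement of $M$ is ever forced in this particular setting; the need to enlarge the embedding is a genuine phenomenon for other stationary covariances, but not here. This does not affect the validity of your proof, since you only use nonnegativity as a hypothesis and correctly flag where it enters.
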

The only problem with the last statement is that, generally speaking, the matrix $S$ is complex. However, in the case of fBm the matrix $C$ is positive definite, so all the eigenvalues $\lambda_k$ are positive, as a result, the matrix $S$ is real. Thus, in order to simulate the fGn, one needs to simulate a vector $(\zeta_1,\zeta_2,\dots,\zeta_M)^\top$ of standard Gaussian variables, multiply it by $S$ and take the first $N$ coordinates of the resulting vector.

Let us turn to the practical realization of the algorithm. We start by noting that it is enough to compute the matrix $S$ only once, then one can simulate as many realizations of fGn as needed. However, I do not recommend to proceed this way. It is usually better to compute the product $Q\Lambda^{1/2}Q^*\zeta$ step by step. First compute $\frac{1}{\sqrt{M}}Q^*\zeta$, taking the inverse DFT of $\zeta$. Then multiply the result by $\Lambda^{1/2}$, i.e.\ multiply it elementwise by the vector $(\lambda^{1/2}_0,\lambda^{1/2}_1,\dots,\lambda^{1/2}_{M-1})^\top$. The last step is the multiplication by $\sqrt{M}Q$, which is made by taking the DFT. As a result, we have one DFT computation, one elementwise multiplication, and one inverse DFT computation, which are usually faster than a single matrix multiplication. 

Now it is a good moment to explain what is meant by ``usually'' in the last paragraph. It is well known that the DFT computation is most efficient when the size of data is a power of $2$; it is made by the so-called fast Fourier transform (FFT) algorithm. So, if one need to simulate e.g.\ $N=1500$ values of fGn (so that $M=2998$), it will be better (and faster) to simulate $2049$ values (so that $M=4096=2^{12}$).

Finally, taking in account everything said, we describe the algorithm.

\begin{enumerate}[1.]
\item Set $N = 2^q + 1$ and $M = 2^{q+1}$.
\item Calculate $\rho^{\vphantom{j}}_H(1),\dots,\rho^{\vphantom{j}}_H(N-1)$ and set $c_0,c_1,\dots, c_{M-1}$ according to \eqref{c_k}.
\item Take FFT to get $\lambda_0,\dots, \lambda_{M-1}$. Theoretically, one should get real numbers. However, since all computer calculations are imprecise, the resulting values will have tiny imaginary parts, so one needs to take the real part of result. 
\item Generate independent standard Gaussian $\zeta_1,\dots,\zeta_M$
\item Take the real part of inverse FFT of $\zeta_1,\dots,\zeta_M$ to obtain  $\frac{1}{\sqrt{M}} Q^* (\zeta_1,\dots,\zeta_M)^\top$. 
\item Multiply the last elementwise by $\sqrt{\lambda_0},\sqrt{\lambda_1},\dots, \sqrt{\lambda_{M-1}}$
\item Take FFT of result to get 
$$
(\xi_1,\dots,\xi_M)^\top = \sqrt{M}Q \Lambda^{1/2}\frac1{\sqrt{M}} Q^* (\zeta_1,\dots,\zeta_M)^\top = S (\zeta_1,\dots,\zeta_M)^\top.
$$
\item Take the real part of $\xi_1,\dots,\xi_N$ to get the fractional Gaussian noise.
\item Multiply by $(T/N)^H$ to obtain the increments of fBm.
\item Take cumulative sums to get the values of fBm.
\end{enumerate}
For reader's convenience I give a Matlab code of (steps 1--8 of) this algorithm. It is split into two parts: the computation of $\Lambda^{1/2}$, which can be done only once, and the simulation. 

\lstset{language=matlab,xleftmargin=.5cm,basicstyle=\ttfamily,keywordstyle=\bf,morekeywords={bsxfun}}
\begin{lstlisting}
function res = Lambda(H,N) 
M = 2*N - 2;
C = zeros(1,M);
G = 2*H;
fbc = @(n)((n+1).^G + abs(n-1).^G - 2*n.^G)/2; 
C(1:N) = fbc(0:(N-1)); 
C(N+1:M) = fliplr(C(2:(N-1))); 
res = real(fft(C)).^0.5;

function res = FGN(lambda,NT) 
if (~exist('NT','var'))
        NT = 1;
end
M = size(lambda,2);
a = bsxfun(@times,ifft(randn(NT,M),[],2),lambda);
res = real(fft(a,[],2));
res = res(:,1:(M/2));
\end{lstlisting}
To simulate $n$ realizations of fGn, use the following code. Note that for large values of $N$ and $n$, due to possible memory issues, it may be better to simulate the realizations one by one, using \texttt{FGN(lambda,1)} or simply \texttt{FGN(lambda)}. 
\begin{lstlisting}
H = 0.7; q = 10; % or whatever you like
N = 2^q + 1;
lambda = Lambda(H,N);
n = 20; % or whatever you like
fGnsamples = FGN(lambda,20);
\end{lstlisting}

\bibliographystyle{ws-procs9x6}
\bibliography{fbm-ln}

\end{document}